\newtheorem{theorem}{Theorem}[section]
\newtheorem{lemma}[theorem]{Lemma}
\theoremstyle{definition}
\newtheorem{definition}[theorem]{Definition}
\newtheorem{example}[theorem]{Example}
\theoremstyle{remark}
\numberwithin{equation}{section}
\begin{document}

\title[Uniformization in higher dimensions]{\sc{On classical uniformization  theorems   for higher dimensional complex Kleinian groups }  }

\author{Angel Cano}
\address{ UCIM, Av. Universidad s/n. Col. Lomas de Chamilpa,
C.P. 62210, Cuernavaca, Morelos, M\'exico.}
\email{angelcano@im.unam.mx}

\author{Luis Loeza}
\address{IIT UACJ, Av. del Charro 610 Norte,  Partido Romero, C.P. 32310, Ciudad Ju\'arez, Chihuahua, M\'exico}
\email{  luis.loeza@uacj.mx}

\author{Alejandro Ucan-Puc}
\address{ UCIM, Av. Universidad s/n. Col. Lomas de Chamilpa,
C.P. 62210, Cuernavaca, Morelos, M\'exico.}
\email{manuel.ucan@im.unam.mx}

  \thanks{  Partially supported by grants of the PAPPIT's project IN101816 and CONACYT's  project 272169} 

\subjclass{Primary 37F99, 32Q, 32M Secondary 30F40, 20H10, 57M60, 53C}



\begin{abstract}
In this article we show that  Bers' simultaneous uniformization as well as  the Köebe's  retrosection theorem 
 are not longer true for discrete groups of projective transformations    acting on the complex projective space.
\end{abstract}

\maketitle

\section*{Introduction}
The  uniformization theorems of Riemann  surfaces plays a mayor role in one dimensional complex dynamics,
its study has its roots in the work of  Poincar\'e  and K\"oebe (see \cite{poincare,koebe_a,koebe_b,koebe_c}), 
the Riemann uniformization theorem asserts  that any simply connected Riemann surface is biholomophic
either   to  the sphere  $\hat{\mathbb{C}}$  the plane $\mathbb{C}$ or the disc $\mathbb{D},$ 
in 1910, K\"oebe  (see \cite{koebe} )``improved'' this ideas in his Retrosection theorem 
proving that for any closed Riemann surface there is a Schottky group such that the associated 
domain of discontinuity uniformizes the surface, later in 1960 Bers (see \cite{bers}) with his 
simultaneous uniformization theorem gives a ``generalisation'' of  the retrosection theorem, 
by asserting  that for any two closed  Riemann surfaces of the same genus, there is a quasi-Fuchsian group
that uniformizes the two surfaces.

Around 1990, J. Seade and A. Verjovsky (see \cite{SV2}) introduced the concept of complex Kleinian group 
as a discrete subgroup of $PSL(n,\mathbb{C})$ acting on the projective complex space with an open subset
where the action of the group is properly discontinuous, a very important family  of complex Kleinian groups
are the subgroups of isometries of the complex hyperbolic space (see \cite{goldman}). In view of this  groups
one natural question  arises, what about the uniformization of higher dimensional complex manifolds in terms
of complex Kleinian groups?, in this case we have that the geometry of higher dimensional complex manifolds
is much more diverse, to exemplify this let us consider the following facts:
  \begin{enumerate}
  \item  The complex manifold 
$\mathbb{P}^n_\mathbb{C}\times\mathbb{P}^n_\mathbb{C}$ is a simply connected complex manifold that
does not admit a complex projective structure, so if we want to ask  for the uniformization of higher
 dimensions in terms of complex kleinian groups  we must  require that the manifolds admit a  complex 
 projective structure (this is going to be a big  first difference with the one dimensional case).
 
 \item In \cite{Epstein} and \cite{thurston} they 
explain the Smilie's construction  of a torus with a complex projective structure that is not complete, that is 
the manifold cannot be realised as the quotient of an open set of the complex projective line and a discrete
subgroup that acts discontinuously on it. Once again,  when asking for uniformization results we must  assure 
that the manifolds that we  will study have  a complex projective structure which is complete.

\item  Last important fact to remember is that in the higher dimensional setting there are several  simply 
connected  domains of $\Bbb{P}^n_{\Bbb{C}}$ which are not biholomorphically  
equivalent. Moreover, some of this domains arise  as connected components in the equicontinuity region of 
complex  Kleinian groups, see \cite{CNS, CJP, CS1}.

  \end{enumerate} 
  
Taking in count this facts we see that is  fully expected that classical  uniformization 
theorems fail in the higher dimensional setting, in this article are able  to prove:

\begin{theorem} \label{t:1}
There is not an analogue of Bers simultaneous uniformization theorem for
groups of $PSL(n+1, \Bbb{C})$ acting on $\Bbb{P}^n_\Bbb{C}$, where $n\geq 2$.
\end{theorem}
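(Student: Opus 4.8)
The plan is to first make the statement precise and then refute it with an explicit pair of manifolds. By ``an analogue of Bers' theorem'' I mean the assertion: \emph{for every pair $M_{1},M_{2}$ of closed complex $n$-manifolds carrying complete complex projective structures with $\pi_{1}(M_{1})\cong\pi_{1}(M_{2})$, there is a discrete subgroup $\Gamma\subset PSL(n+1,\mathbb{C})$ whose region of discontinuity contains two $\Gamma$-invariant connected components $\Omega_{1},\Omega_{2}$ with $\Omega_{i}/\Gamma\cong M_{i}$ realizing the holonomy of $M_{i}$.} For $n=1$ this is exactly Bers' theorem, the $\Omega_{i}$ being the two invariant quasidiscs of a quasi-Fuchsian group. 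I would refute it already for the pair $M_{1}=M_{2}=M$ with $M$ any closed complex hyperbolic $n$-manifold.

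The first, and hardest, step is to pin down which groups $\Gamma$ could possibly do this. If $\Gamma$ is as above then it acts freely and cocompactly on each $\Omega_{i}$, so each $\Omega_{i}$ is a simply connected domain of $\mathbb{P}^{n}_{\mathbb{C}}$ carrying a cocompact discrete group of projective transformations, and the common frontier $\Lambda=\mathbb{P}^{n}_{\mathbb{C}}\setminus(\Omega_{1}\cup\Omega_{2})$ separates $\mathbb{P}^{n}_{\mathbb{C}}$; by Alexander duality in $\mathbb{P}^{n}_{\mathbb{C}}$ this already forces $\Lambda$ to have covering dimension at least $2n-1$, which rules out all the ``linear'' limit sets (unions of at most $n+1$ projective hyperplanes and the like, of real dimension $2n-2$) that occur for the vast majority of complex Kleinian groups. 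Using the structure theory of discrete subgroups of $PSL(n+1,\mathbb{C})$ and the classification of their Kulkarni limit sets (cf.\ \cite{CNS,CS1,CJP}), I would then argue that a $\Gamma$ with two such invariant components must be, up to finite index and conjugation, a cocompact lattice of $PU(n,1)$, with $\Lambda=\partial\mathbb{H}^{n}_{\mathbb{C}}=\{[z]:\langle z,z\rangle=0\}$ and $\{\Omega_{1},\Omega_{2}\}=\{\mathbb{H}^{n}_{\mathbb{C}},\,D_{n}\}$, where $D_{n}=\{[z]:\langle z,z\rangle>0\}$ is the projective dual of the ball. The delicate points are excluding exotic (non-symmetric) divisible domains, and showing that, with the two-ball geometry imposed, a cocompact lattice of $PU(n,1)$ has no discrete faithful representation into $PSL(n+1,\mathbb{C})$ beyond the obvious one; this is where Mostow rigidity and the rigidity theory of complex hyperbolic lattices enter, and they are available precisely because $n\geq2$. (Such a lattice does in fact uniformize the pair $(\mathbb{H}^{n}_{\mathbb{C}}/\Gamma,\ D_{n}/\Gamma)$ simultaneously, but these pairs are rigidly determined by $\Gamma$ and form only a countable family, which is the heart of the matter.)

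Granting this reduction, the contradiction is immediate. One of the two quotients is then a closed complex hyperbolic manifold $\mathbb{H}^{n}_{\mathbb{C}}/\Gamma$ (whose universal cover is the bounded domain $\mathbb{H}^{n}_{\mathbb{C}}$), while the other is $D_{n}/\Gamma$, whose universal cover is a covering of $D_{n}$. But $D_{n}$ contains projective lines (take the line spanned by two vectors lying in a positive-definite $2$-plane of the Hermitian form), such a line lifts to any cover of $D_{n}$ because it is simply connected, and a bounded domain such as $\mathbb{H}^{n}_{\mathbb{C}}$ contains no compact curve; hence the universal cover of $D_{n}/\Gamma$ is not biholomorphic to $\mathbb{H}^{n}_{\mathbb{C}}$, and therefore $D_{n}/\Gamma$ is not biholomorphic to $\mathbb{H}^{n}_{\mathbb{C}}/\Gamma$. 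So the pair $(M,M)$ with $M$ closed complex hyperbolic of dimension $n\geq2$ cannot be simultaneously uniformized, which proves the theorem. The structural reason I would emphasize is that Bers' theorem is non-trivial in dimension one only because quasi-Fuchsian space is the large deformation space $\mathcal{T}_{g}\times\mathcal{T}_{g}$, whereas for $n\geq2$ the would-be ``Fuchsian'' groups are complex hyperbolic lattices, which are rigid in $PSL(n+1,\mathbb{C})$; the space of candidate uniformizing groups is discrete, so the two factors can never be prescribed independently.
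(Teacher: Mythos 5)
Your choice of counterexample --- the pair $(M,M)$ with $M$ a closed complex hyperbolic $n$-manifold --- is exactly the one the paper uses, but your route to the contradiction leaves the hardest step unproved and, worse, rests on a false picture of the dynamics. The unproved step is the reduction: you assert, via ``structure theory'' and the classification of Kulkarni limit sets, that any uniformizing group must be (up to finite index and conjugation) a cocompact lattice of $PU(1,n)$ with invariant components $\mathbb{H}^n_{\mathbb{C}}$ and $D_n$, and you explicitly defer the ``delicate points'' (exotic divisible domains, global rigidity of representations into $PSL(n+1,\mathbb{C})$, which is not literally Mostow rigidity). The paper disposes of all of this in one stroke with the Mok--Yeung/Klingler theorem (Theorem \ref{MYK}): each copy of $M$ in $U/G$ inherits a projective structure compatible with its complex structure, such a structure is unique, so the holonomy group $G$ is forced to be $\Gamma$ itself up to projective conjugation. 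You never invoke this uniqueness statement, and without it your reduction is a sketch, not a proof.

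The outright error is the configuration you reduce to. For $n\geq 2$ a non-elementary discrete subgroup of $PU(1,n)$ does \emph{not} act properly discontinuously on the exterior $D_n$ of the ball: the obstruction to equicontinuity at a point $p$ with $\langle p,p\rangle>0$ is that $p$ lies on a tangent hyperplane $z^{\bot}$ for some null vector $z$ (such $z$ exist because $p^{\bot}$ has signature $(n-1,1)$), and for a cocompact lattice these tangent hyperplanes at points of $\Lambda_{CG}(\Gamma)=\partial\mathbb{H}^n_{\mathbb{C}}$ sweep out all of $\mathbb{P}^n_{\mathbb{C}}\setminus\mathbb{H}^n_{\mathbb{C}}$. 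This is precisely the theorem of \cite{CLL} quoted in the paper's proof: $\mathbb{H}^n_{\mathbb{C}}$ is the \emph{largest} open set on which $\Gamma$ acts properly discontinuously. Consequently your parenthetical claim that such a lattice ``does in fact uniformize the pair $(\mathbb{H}^n_{\mathbb{C}}/\Gamma,\,D_n/\Gamma)$ simultaneously'' is false; the action on $D_n$ is not properly discontinuous, $D_n/\Gamma$ is not a Hausdorff complex manifold, and your closing argument (projective lines in $D_n$ versus the boundedness of $\mathbb{H}^n_{\mathbb{C}}$) reasons about a quotient that does not exist. The true obstruction --- and the paper's actual punchline --- is that the maximal domain of proper discontinuity of the forced holonomy group is the connected set $\mathbb{H}^n_{\mathbb{C}}$, so there is no second invariant component at all: once $G=\Gamma$ and $U\subset\mathbb{H}^n_{\mathbb{C}}$, the quotient $U/G$ cannot be $M\sqcup M$.
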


This result was proven for $n=2$ in \cite{CS1}.  We also show  that the algebraic and geometric higher 
dimensional analog of the K\"oebe's retrosection theorem is false, here algebraic means that the fundamental
group of a manifold has a representation as a purely loxodromic free discrete group and geometric means that
the manifold can be realised as a quotient by a Schottky group. 

\begin{theorem} \label{t:2}
The geometric and algebraic version of   Köebe's retrosection is not longer true for
groups of $PSL(n+1, \Bbb{C})$ acting on $\Bbb{P}^n_\Bbb{C}$, where $n\geq 2$.
\end{theorem}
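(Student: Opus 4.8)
The plan is to exhibit, for every $n\ge 2$, a closed complex $n$-manifold with a complete complex projective structure that is not the quotient of the region of discontinuity of any complex Schottky group and whose fundamental group, although it admits purely loxodromic free discrete representations into $PSL(n+1,\mathbb{C})$, admits none that uniformises it; this refutes the geometric and the algebraic versions simultaneously. The examples are the Hopf manifolds. For a linear contraction $A\in GL(n,\mathbb{C})$ the manifold $H_A=(\mathbb{C}^n\setminus\{0\})/\langle A\rangle$ is compact with $\pi_1(H_A)\cong\mathbb{Z}$; since $\mathbb{C}^n\setminus\{0\}$ is simply connected for $n\ge 2$, the open embedding $\mathbb{C}^n\setminus\{0\}\hookrightarrow\mathbb{P}^n_{\mathbb{C}}$, $(z_1,\dots,z_n)\mapsto[z_1:\cdots:z_n:1]$, is a developing map realising a complete complex projective structure on $H_A$ with holonomy the cyclic group generated by $\operatorname{diag}(A,1)\in PSL(n+1,\mathbb{C})$. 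Taking $A=\operatorname{diag}(\lambda_1,\dots,\lambda_n)$ with $0<|\lambda_1|<\dots<|\lambda_n|<1$ the holonomy generator is loxodromic, so $H_A$ is the exact analogue of the torus $\mathbb{C}^{*}/\langle z\mapsto\lambda z\rangle$, which in dimension one is the rank one Köebe uniformisation; the point of the theorem is that this analogue fails for $n\ge 2$.

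The first step is a reduction: if $M$ is a closed complex projective $n$-manifold with $\pi_1(M)\cong\mathbb{Z}$ and $M=\Omega(\Gamma)/\Gamma$ for a torsion-free complex Kleinian group $\Gamma$, then $\Gamma$, being a quotient of $\mathbb{Z}$ which is free and non-trivial, is infinite cyclic and generated by a single loxodromic $g$. One then classifies the loxodromic conjugacy classes of $PSL(n+1,\mathbb{C})$ — those with $n+1$ pairwise distinct eigenvalue moduli, the loxo-parabolic ones carrying a Jordan block, and the remaining ones whose attracting/repelling invariant subspaces are larger — and computes the Kulkarni region of discontinuity of $\langle g\rangle$ and its quotient in each case. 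I expect the outcome to be: for the strongly loxodromic $g$ above one has $\Omega_{\mathrm{Kul}}(\langle g\rangle)=\mathbb{P}^n_{\mathbb{C}}\setminus(H\cup H')$, the complement of the two extreme invariant hyperplanes, which is biholomorphic to $\mathbb{C}^{n-1}\times\mathbb{C}^{*}$ and has non-compact quotient, namely a Hopf $n$-manifold with a lower-dimensional Hopf submanifold removed; the loxo-parabolic case is similar; and the only $g$ whose canonical quotient is compact are those conjugate to $\operatorname{diag}(A',1)$ with all eigenvalues of $A'$ of the same modulus, in which case the quotient is the classical Hopf manifold $H_{A'}$. Since the multiset of moduli of the eigenvalues of the contraction is a biholomorphic invariant of a Hopf manifold, a Hopf manifold built from a contraction with two or more distinct eigenvalue moduli — for instance the $H_A$ above — is never obtained, neither from a cyclic Schottky group (by the case analysis) nor from a larger one (ruled out by $\pi_1\cong\mathbb{Z}$). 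This gives the geometric failure, and since the only candidate holonomy group for $H_A$, a cyclic loxodromic group, never has $H_A$ as its Kulkarni quotient, it gives the algebraic failure as well.

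The main obstacle is this case analysis: classifying the loxodromic elements of $PSL(n+1,\mathbb{C})$ for general $n$ and, for each conjugacy type, computing the Kulkarni limit set and the biholomorphism type of the quotient uniformly in $n$, the strongly loxodromic, loxo-parabolic and ``homothety'' cases behaving quite differently. A second, more definitional, difficulty is that the statement must be phrased with the canonical Kulkarni (equivalently equicontinuity) region and not with an arbitrary $\Gamma$-invariant domain of discontinuity: every Hopf manifold \emph{is} the quotient of such a domain — the domain $\mathbb{C}^n\setminus\{0\}\subset\mathbb{P}^n_{\mathbb{C}}$ properly contains $\Omega_{\mathrm{Kul}}$ of its holonomy — so without this care the statement would be false. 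Finally, if one wants the sharper claim that no complex Schottky group of any rank produces a closed manifold, the rank $\ge 2$ case requires its own argument; here I would either read $\Lambda_{\mathrm{Kul}}(\Gamma)$ through ping-pong as a Cantor family of projective subspaces, each loxodromic generator contributing invariant hyperplanes, and deduce non-compactness of $\Omega_{\mathrm{Kul}}(\Gamma)/\Gamma$ from that lamination, or, in the sub-case $\Gamma\subset PU(n,1)$, observe that a convex cocompact non-lattice quotient of complex hyperbolic space is already non-compact and sits inside $\Omega_{\mathrm{Kul}}(\Gamma)/\Gamma$.
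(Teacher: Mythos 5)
Your approach is genuinely different from the paper's, but it has a gap that is fatal for the theorem as the paper actually formulates and proves it. In the paper, ``$M$ is uniformized by $G$'' means $M=U/G$ for \emph{some} $G$-invariant open set $U\subset\Bbb{P}^n_\Bbb{C}$ on which $G$ acts properly discontinuously --- not for the canonical Kulkarni or equicontinuity region. Under that reading your witness is not a witness: for $g=[\mathrm{diag}(\lambda_1,\dots,\lambda_n,1)]$ with $0<|\lambda_1|<\dots<|\lambda_n|<1$, the set $U=\Bbb{C}^n\setminus\{0\}\subset\Bbb{P}^n_\Bbb{C}$ is $\langle g\rangle$-invariant, $\langle g\rangle$ acts properly discontinuously on it with compact quotient $H_A$, and $\langle g\rangle$ is a Schottky like group in the paper's sense (take for $A_g$ and $A_{g^{-1}}$ small closed balls about the attracting point $e_{n+1}$ and the repelling point $e_1$; each is strictly contracted by the corresponding generator). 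This group is also free, discrete and purely loxodromic, so $H_A$ satisfies \emph{both} the geometric and the algebraic retrosection property as the paper understands them. You notice this yourself and react by restating the theorem with the Kulkarni region; but the theorem is an existence statement (``some manifold is not uniformizable''), so the correct conclusion is not that ``without this care the statement would be false'' but that Hopf manifolds cannot serve as the counterexample. On top of this, the core of your argument --- the classification of loxodromic conjugacy classes, the computation of $\Omega_{\mathrm{Kul}}$ and of its quotients uniformly in $n$, and the rank $\ge 2$ case --- is explicitly deferred (``I expect the outcome to be\dots'', ``the main obstacle is this case analysis''), so even the weakened, redefined statement is not established.

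The paper's route is entirely different and avoids these problems: it takes $M$ a \emph{compact complex hyperbolic} manifold $\Bbb{H}^n_\Bbb{C}/\Gamma$ and invokes the Mok--Yeung/Klingler rigidity theorem (Theorem \ref{MYK}) to force any group $G$ with $M=U/G$ to coincide with the lattice $\Gamma$ up to projective conjugation, no matter which invariant domain $U$ is used. The contradiction is then structural: a cocompact lattice in $PU(1,n)$ has connected Chen--Greenberg limit set $\partial\Bbb{H}^n_\Bbb{C}$ and Gromov boundary $S^{2n-1}$, whereas a Schottky like group has disconnected $\Lambda_{CG}$ (Lemma \ref{l:hsl}) and a nontrivial free group has a Cantor set (or two points) as Gromov boundary. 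The rigidity theorem is exactly the ingredient your argument lacks: it is what controls \emph{every} possible uniformizing domain at once, which is the step where the Hopf examples break down.
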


A weaker version of this result was essentially proven  for $n$ even   in \cite{cano}.

The paper is organised as follows: in section \ref{s:recall} we introduce the terminology  used along the 
article, in section \ref{s:slg} we introduce the notion of Schottky like groups and prove a technical lemma 
concerning its dynamic which will by useful in the last section, finally   in section \ref{s:pmt} we provide full proofs 
of the main results of this paper.

\section{Preliminaries}  \label{s:recall}
The complex projective space $\mathbb {P}^n_{\mathbb {C}}$
is defined as: $ \mathbb {P}^{n}_{\mathbb {C}}=(\mathbb {C}^{n+1}\setminus \{0\})/\Bbb{C}^* \,,$
where $\Bbb{C}^*$ acts by  the usual scalar multiplication. 
If $[\mbox{ }]:\mathbb {C}^{n+1}\setminus\{0\}\rightarrow
\mathbb {P}^{n}_{\mathbb {C}}$ is the quotient map, then a
non-empty set  $H\subset \mathbb {P}^n_{\mathbb {C}}$ is said to
be a projective subspace if there is a $\mathbb {C}$-linear
subspace  $\widetilde H$ such that $[\widetilde
H\setminus \{0\}]=H$.  In this article,  $e_1,\ldots, e_{n+1}$ will denote the standard basis for $\Bbb{C}^{n+1}$.
Given a set of points $P$   in $\mathbb{P}^{n}_{\mathbb{C}}$, we
define:
$$
Span( P)=\bigcap\{l\subset \mathbb{P}^n_{\mathbb{C}}\mid l \textrm{ is a projective subspace containing } P\}
.$$

 The group of projective automorphisms of $\mathbb{P}^{n}_{\mathbb{C}}$ is 
$PSL(n+1, \mathbb {C}) = GL({n+1}, \Bbb{C})/\Bbb{C}^*,$
where $\Bbb{C}^* $ acts by the usual scalar multiplication, $PSL(n+1, \mathbb {C})$ is a Lie group whose
elements are called projective transformations.
We denote   by $[[\mbox{  }]]: GL(n+1, \mathbb
{C})\rightarrow PSL(n+1, \mathbb {C})$    the quotient map. Given     $ \gamma  \in PSL(n+1, \mathbb
{C})$,  we  say that  $\widetilde \gamma  \in GL(n+1, \mathbb {C})$ is a  lift of $ \gamma  $ if 
$[[\widetilde  \gamma  ]]= \gamma  $.

Now consider the following Hermitian form $\langle\, ,\rangle:\Bbb{C}^{n+1}\rightarrow \Bbb{C}$, given by:
\[
\langle z,w\rangle=z_1\overline{w_1}+ \ldots+z_n\overline{w_n}-z_{n+1}\overline{w_{n+1}} 
\]
 We set
\[
U(1,n)=\{g\in GL(n+1,\Bbb{C}): \langle g(z), g(w)\rangle =\langle z, w\rangle \}.
\]
The respective projectivization $PU(1,n)$ 
preserves the unitary complex ball:
\[
\Bbb{H}^n_\Bbb{C}=\{[w]\in \Bbb{P}^n_{\Bbb{C}}\mid \langle w,w\rangle <0\}
\]

Elements in $PU(1,n)$ splits into 3 types according to the position of its fixed 
points in $\overline{\Bbb{H}^n_{\Bbb{C}}}$, more precisely:

\begin{definition}
 Let $\gamma\in PU(1,n)$, then $\gamma$ is called
\begin{enumerate}
  \item  elliptic if $\gamma$ has a fixed point in $\Bbb{H}^n_{\Bbb{C}}$.
  
  \item  parabolic  if $\gamma$ has exactly one fixed point in $\partial \Bbb{H}^n_{\Bbb{C}}$.
  
   \item  loxodromic  if $\gamma$ has exactly two fixed point in $\partial\Bbb{H}^n_{\Bbb{C}}$.
 
\end{enumerate}
\end{definition}

Given a group $\Gamma\subset PU(1,n)$, we define the following notion of limit set due to Chen and 
Greenberg, see \cite{CG}.
 
\begin{definition}
Let $\Gamma\subset PU(1,n)$, then $\Lambda_{CG}(\Gamma)$ is to be defined  as
$\overline{\Gamma x}\cap \partial \Bbb{H}^n_\Bbb{C}$, where $x\in \Bbb{H}^n_\Bbb{C}$ is any point.
\end{definition}

 As is the Fuchsian groups case, it is customary to show that $\Lambda_{CG}(\Gamma)$ does not depend on
 the choice of $x$ and $\Lambda_{CG}(\Gamma)$ has either 1,2 or infinite points.  A group is said to be
non-elementary if  $\Lambda_{CG}(\Gamma)$ has infinite points.
 In the following, given a projective subspace $P\subset \Bbb{P}^n_\Bbb{C}$ we will define 
\[
P^\bot=[\{w\in \Bbb{C}^{n+1}\mid \langle w,v\rangle=0 \textrm{ for all } v\in [P]^{-1} \}\setminus\{0\}].
\]

\begin{lemma} \label{l:dinlox}
Let $\gamma\in PU(1,n)$ be a loxodromic element and $a,r\in\partial\Bbb{H}^n_\Bbb{C}$ be the attracting and repelling points  of $\gamma$ respectively, thus
\begin{enumerate}
\item We have $\gamma^n \xymatrix{ \ar[r]_{m \rightarrow
\infty}&}  a$ uniformly on compact sets  of $\Bbb{P}^n_\Bbb{C}\setminus r^\bot$.

\item We have $\gamma^{-n} \xymatrix{ \ar[r]_{m \rightarrow
\infty}&}  r$ uniformly on compact sets  of $r^\bot\setminus a^{\bot}.$

\item  The transformation $\gamma$ restricted to $r^\bot\cap  a^{\bot}$ is conjugate to an element of $PU(n-1)$ acting on $\Bbb{P}^{n-1}_\Bbb{C}$
\end{enumerate} 
\end{lemma}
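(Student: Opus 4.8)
The plan is to conjugate $\gamma$ into a diagonal normal form adapted to the pair $\{a,r\}$ and then to read off all three assertions from explicit computations in two affine charts. First I would fix a lift $\widetilde\gamma\in U(1,n)$ and use the symmetry that $\langle\widetilde\gamma z,\widetilde\gamma w\rangle=\langle z,w\rangle$ imposes on the spectrum: if $\widetilde\gamma v=\mu v$ and $\widetilde\gamma w=\nu w$ then $\langle v,w\rangle=\mu\overline{\nu}\langle v,w\rangle$, so $\langle v,w\rangle=0$ unless $\nu=\overline{\mu}^{-1}$. In particular every eigenvector whose eigenvalue has modulus $\ne1$ is isotropic, and a generalized eigenspace attached to such an eigenvalue is totally isotropic; since $\langle\cdot,\cdot\rangle$ has signature $(n,1)$ its totally isotropic subspaces have dimension at most $1$ and it is nondegenerate. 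Feeding in the definition of loxodromic (two fixed points on $\partial\mathbb{H}^n_{\mathbb{C}}$, none in $\mathbb{H}^n_{\mathbb{C}}$) one recovers the standard spectral picture (part of the Chen--Greenberg classification, see \cite{CG,goldman}): $\gamma$ is diagonalizable with eigenvalues $\alpha,\lambda_2,\dots,\lambda_n,\overline{\alpha}^{-1}$, where $|\alpha|>1$ and $|\lambda_j|=1$; the $\alpha$- and $\overline{\alpha}^{-1}$-eigenspaces are one-dimensional isotropic lines $\mathbb{C}\widetilde a$, $\mathbb{C}\widetilde r$ projecting to $a$ and $r$, with $\langle\widetilde a,\widetilde r\rangle\ne0$ (otherwise $\mathrm{Span}(\widetilde a,\widetilde r)$ would be a $2$-dimensional totally isotropic subspace); and the remaining eigenvectors $v_2,\dots,v_n$ span the positive-definite subspace $\{\widetilde a,\widetilde r\}^{\perp}$.

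Next I would pass to the eigenbasis $(\widetilde a,v_2,\dots,v_n,\widetilde r)$, in which $\widetilde\gamma=\mathrm{diag}(\alpha,\lambda_2,\dots,\lambda_n,\overline{\alpha}^{-1})$; writing $[x_1:\cdots:x_{n+1}]$ for the corresponding homogeneous coordinates, $a=[1:0:\cdots:0]$ and $r=[0:\cdots:0:1]$. From $\langle\widetilde a,\widetilde a\rangle=0$, $\langle v_j,\widetilde a\rangle=0$ and $\langle\widetilde r,\widetilde a\rangle\ne0$ one gets $\langle w,\widetilde a\rangle=x_{n+1}\langle\widetilde r,\widetilde a\rangle$, hence $a^{\perp}=\{x_{n+1}=0\}$; symmetrically $r^{\perp}=\{x_1=0\}$, so $r^{\perp}\cap a^{\perp}=\{x_1=x_{n+1}=0\}=\mathbb{P}\big(\mathrm{Span}(v_2,\dots,v_n)\big)$. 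Since $\gamma$ is unitary, $\gamma(u^{\perp})=(\gamma u)^{\perp}$ for every $u$, so $\gamma$ preserves $a^{\perp}$, $r^{\perp}$ and $r^{\perp}\cap a^{\perp}$, and in particular restricts to the last of these.

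The three claims then follow by inspection of the diagonal action. For (1): $\mathbb{P}^n_{\mathbb{C}}\setminus r^{\perp}$ is exactly the affine chart $\{x_1\ne0\}\cong\mathbb{C}^n$, on which $\gamma^m$ sends $[1:x_2:\cdots:x_{n+1}]$ to $[\,1:(\lambda_2/\alpha)^m x_2:\cdots:(\lambda_n/\alpha)^m x_n:|\alpha|^{-2m}x_{n+1}\,]$; since $|\lambda_j/\alpha|=|\alpha|^{-1}<1$ and $|\alpha|^{-2m}\to0$, every non-leading affine coordinate goes to $0$ uniformly on bounded sets, so $\gamma^m\to a$ uniformly on compact subsets of the chart, i.e. of $\mathbb{P}^n_{\mathbb{C}}\setminus r^{\perp}$ (convergence on compacta may be tested in the chart, the Fubini--Study and Euclidean metrics being uniformly comparable on bounded sets). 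For (2): inside $r^{\perp}$ the set $r^{\perp}\setminus a^{\perp}$ is the affine chart $\{x_1=0,\ x_{n+1}\ne0\}$, on which $\gamma^{-m}$ sends $[0:x_2:\cdots:x_n:1]$ to $[\,0:(\lambda_2\overline{\alpha})^{-m}x_2:\cdots:(\lambda_n\overline{\alpha})^{-m}x_n:1\,]$; since $|\lambda_j\overline{\alpha}|=|\alpha|>1$ these coordinates go to $0$ uniformly on bounded sets, so $\gamma^{-m}\to r$ uniformly on compact subsets of $r^{\perp}\setminus a^{\perp}$. For (3): on $r^{\perp}\cap a^{\perp}=\mathbb{P}\big(\mathrm{Span}(v_2,\dots,v_n)\big)$ the map $\gamma$ acts as $\mathrm{diag}(\lambda_2,\dots,\lambda_n)$ on a subspace on which $\langle\cdot,\cdot\rangle$ is positive definite, so choosing an orthonormal basis of that subspace conjugates this restriction to an element of $PU(n-1)$.

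The chart computations are routine; the genuine content is the normal-form step together with the identification of $a^{\perp}$, $r^{\perp}$, $r^{\perp}\cap a^{\perp}$ with coordinate subspaces and the modulus inequalities $|\lambda_j/\alpha|<1<|\lambda_j\overline{\alpha}|$, both coming from $|\lambda_j|=1<|\alpha|$. I expect the only real obstacle to be giving a clean self-contained account of the spectral structure of loxodromic elements — in particular showing a loxodromic $\gamma$ has an eigenvalue of modulus $\ne1$ with one-dimensional eigenspace, and has no other isotropic eigenlines (so that $a,r$ really are the only two boundary fixed points) — but this is classical and can be invoked from the Chen--Greenberg theory \cite{CG,goldman} rather than reproved.
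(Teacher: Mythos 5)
The paper gives no proof of this lemma at all --- it is stated in the preliminaries as part of the standard Chen--Greenberg theory of loxodromic isometries --- so there is nothing of the authors' to compare against; your diagonalization argument is the standard proof and is correct, including the reduction of the spectral normal form to the classical classification rather than reproving it. One remark worth recording: your computation identifies $r^{\perp}\cap a^{\perp}$ with the projectivization of the $(n-1)$-dimensional positive-definite subspace $\{\widetilde a,\widetilde r\}^{\perp}$, i.e.\ with a copy of $\mathbb{P}^{n-2}_{\mathbb{C}}$ rather than $\mathbb{P}^{n-1}_{\mathbb{C}}$ as printed in item (3); your proof silently corrects this off-by-one in the statement, and the restriction is indeed conjugate to an element of $PU(n-1)$ acting on that $\mathbb{P}^{n-2}_{\mathbb{C}}$.
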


A complex hyperbolic manifold is the quotient of a open subset of the complex hyperbolic space and a 
discrete subgroup of $PU(1,n)$. Mok-Young  and Klingler  showed independently 
that for a complex hyperbolic manifold with finite volume there is a unique complex projective structure
compatible with  the complex hyperbolic structure, this  result will be crucial central in our discussion, 
see \cite{MY, klingler},  for sake  of completeness here we write down the result.

\begin{theorem}[Mok-Yeung, Klingler] \label{MYK}
Let $\Gamma\subset PU(1,n)$ be a discrete group such that  $M=\Bbb{H}^n_{\Bbb{C}}/\Gamma$ is  manifold
 of finite volume, then $M$ has only one projective structure compatible with the complex extructure.
\end{theorem}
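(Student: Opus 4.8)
The plan is to recast the statement as a vanishing theorem for holomorphic tensors and to derive that vanishing from the negative curvature of $\Bbb{H}^n_\Bbb{C}$, paying special attention to the ends of the finite-volume quotient. First I would fix the \emph{standard} projective structure $P_0$ on $M$: since $\Gamma\subset PU(1,n)\subset PSL(n+1,\Bbb{C})$ acts by projective transformations of $\Bbb{P}^n_\Bbb{C}$ and preserves the ball $\Bbb{H}^n_\Bbb{C}$, the inclusion $\Bbb{H}^n_\Bbb{C}\hookrightarrow \Bbb{P}^n_\Bbb{C}$ descends to a developing map with holonomy $\rho_0=\mathrm{incl}\colon\Gamma\to PSL(n+1,\Bbb{C})$, and this is the structure whose uniqueness we must establish. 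Any holomorphic projective structure $P$ compatible with the complex structure of $M$ is likewise encoded by a developing pair $(\mathrm{dev},\rho)$ with $\mathrm{dev}\colon\Bbb{H}^n_\Bbb{C}\to\Bbb{P}^n_\Bbb{C}$ holomorphic and $\rho$-equivariant, and the goal is to show $(\mathrm{dev},\rho)$ is equivalent to $(\mathrm{incl},\rho_0)$.

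Second, I would use the fact that, once $P_0$ is fixed, the set of compatible holomorphic projective structures is an affine space modelled on $H^0(M,E)$, where $E$ is a natural homogeneous holomorphic vector bundle built functorially from $T_M$ and measuring the difference of two projective structures (the \emph{projective Schwarzian}); in complex dimension one $E$ is the bundle $K_M^{\otimes 2}$ of holomorphic quadratic differentials, and in general $E$ is the homogeneous subbundle of a tensor bundle in $\Omega^1_M$ and $T_M$ cut out by the flatness of the structure. Thus $P=P_0$ for every compatible $P$ precisely when $H^0(M,E)=0$, and the theorem is equivalent to this single vanishing statement.

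Third, I would prove the vanishing by a Bochner--Kodaira argument. Equip $M$ with the Bergman (Kähler--Einstein) metric inherited from $\Bbb{H}^n_\Bbb{C}$, whose holomorphic sectional curvature is negatively pinched; the induced Hermitian metric on the homogeneous bundle $E$ then has curvature of a definite sign, so any holomorphic section $\phi\in H^0(M,E)$ satisfies a Weitzenböck identity whose curvature term is nonpositive. On a \emph{compact} $M$ integration by parts immediately forces $\nabla\phi=0$ and then $\phi=0$. To upgrade this to the finite-volume case I would analyse the ends of $M$: each cusp of $\Bbb{H}^n_\Bbb{C}/\Gamma$ is a neighbourhood of a parabolic fixed point with Heisenberg-nilmanifold cross-sections, and I would argue that a holomorphic section defining a genuine projective structure is automatically tame there (square-integrable, with square-integrable covariant derivative), so that a Gaffney/Andreotti--Vesentini cutoff argument, exploiting $\mathrm{vol}(M)<\infty$, makes the boundary contributions in the Bochner integral vanish.

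The main obstacle is exactly this last point: controlling $\phi$ at the cusps so that the integration by parts remains valid. For compact $M$ the vanishing is directly accessible, but for finite volume one cannot a priori discard the contribution of the ends, and the required $L^2$-estimates depend on the precise asymptotic geometry of complex-hyperbolic cusps (equivalently, on the $L^2$-cohomology machinery of Mok and of Saper--Stern). I would also record the complementary holonomy-rigidity route --- showing that $\rho$ must be conjugate to $\rho_0$ into $PU(1,n)$ and that the equivariant developing map is then forced to coincide with the inclusion by a Schwarz-lemma argument --- but I would flag that, since lattices in $PU(1,n)$ need not be Archimedean superrigid, this route is genuinely delicate and is precisely what makes the theorem of Mok--Yeung and Klingler a deep result rather than a routine consequence of Mostow rigidity.
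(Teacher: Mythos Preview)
The paper does not prove Theorem~\ref{MYK}. It is stated without proof and attributed to Mok--Yeung and Klingler via the references \cite{MY, klingler}; the authors explicitly say ``for sake of completeness here we write down the result'' and then use it as a black box in the proofs of Theorems~\ref{t:1} and~\ref{t:2}. So there is no proof in the paper against which to compare your proposal.

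That said, your outline is a reasonable sketch of the Mok--Yeung strategy: parametrising holomorphic projective structures by sections of a homogeneous bundle $E\subset S^2\Omega^1_M\otimes T_M$ (the higher-dimensional Schwarzian) and killing $H^0(M,E)$ by a Bochner--Kodaira argument using the K\"ahler--Einstein metric. You correctly isolate the genuine difficulty --- in the strictly finite-volume (noncompact) case the integration by parts needs $L^2$ control of the Schwarzian tensor near the cusps, and this is where the real work lies. Your alternative holonomy route is closer in spirit to Klingler's approach. But none of this can be checked against the present paper, which simply quotes the theorem.
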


This is very deep result which has several dynamical consequences  some of which  we will se 
later.

\section{Schottky like groups}\label{s:slg}
Schottky groups are the  ``simplest'' examples of Kleinian groups and enjoy very 
interesting properties, unfortunately they are always realizable in the higher dimensional setting (see \cite{cano}) 
in a ``usual'' way. For this reason let us introduce here a weaker form of  Schottky  groups, 
compare with \cite{CoG}.

\begin{definition} Let  $\Sigma\subset PSL(n+1,\Bbb{C})$ 
a finite set  which is symmetric ({\it i. e. } $a^{-1} \in\Sigma $ for all
$a\in \Sigma $) and $\{A_a\}_{a \in \Sigma }$ a family of  compact non-empty pairwise disjoint subsets of
 $\Bbb{P}^n_{\Bbb{C}}$ such that 
 for each  $a \in \Sigma$ we have  
  $$
  \bigcup_{b\in \Sigma\setminus\{a^{-1}\}}a(A_b) \varsubsetneq	  A_a.
  $$
The group $\Gamma$ generated  by $\Sigma$ 
is called  Schottky like group. We define a ``kind'' of limit set for $\Gamma$ as follows:
\[
\Lambda_{S}(\Gamma)=
\overline{
\{
y\in \Bbb{P}^n_{\Bbb{C}}\vert \exists (\phi_m)\subset \Sigma, (y_n)\subset A_{\phi_0}: \phi_{j+1}\phi_j\neq Id,  \phi_{m}\circ\ldots\circ \phi_1(y_m) \xymatrix{ \ar[r]_{m \rightarrow
\infty}&}  y
\}
}
\]

\end{definition}

Clearly every Schottky like group is a free, finitely generated and  discrete.

\begin{example}
Every Schottky group of $PSL(2,\Bbb{C})$ acting on $\Bbb{P}^1_{\Bbb{C}}$
is a Schottky like group.
\end{example}

The following is a straightforward result which will be used later, see \cite{cano, cl} for a proof.

\begin{lemma}\label{l:dinpar}
Let us consider the cyclic group $\Gamma\subset PSL(n+1,\Bbb{C})$ generated by the element
\[
\gamma=
\begin{bmatrix}
A\\
   &B
   \end{bmatrix}
\]
where $A$ is a $k\times k$ diagonalisable matrix with unitary proper values  and
$B$ is a $(n+1-k)\times (n+1-k)$-Jordan block whose proper value is $ Det(A)^{-(n+1-k)^{-1}}$. Then 
\begin{enumerate}
\item  If $x\in \Bbb{P}^n_\Bbb{C}\setminus Span(e_{1},\ldots e_{k})$, then the  set of accumulation points of 
$\Gamma x$ is $e_{1}$.
\item  If $x\in Span(e_{1},\ldots e_{k})$, then $ x$ belongs to the  set of accumulation points of  $\Gamma x$.
\end{enumerate}
\end{lemma}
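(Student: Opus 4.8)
The statement concerns only the powers of a single matrix, so the plan is a direct Jordan-form computation using the normal form already supplied by the hypothesis. First I would lift $\gamma$ to $\widetilde\gamma=\mathrm{diag}(A,B)\in GL(n+1,\mathbb{C})$, so that $\widetilde\gamma^{\,m}=\mathrm{diag}(A^{m},B^{m})$ for every $m$. Since $A$ is diagonalizable with all eigenvalues of modulus $1$, the cyclic set $\{A^{m}:m\in\mathbb{Z}\}$ is relatively compact in $GL(k,\mathbb{C})$; in particular $\sup_{m}\|A^{m}\|<\infty$, and $|\det A|=1$, so the eigenvalue $\mu=\det(A)^{-1/(n+1-k)}$ of $B$ also satisfies $|\mu|=1$. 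Writing $B=\mu(I+\mu^{-1}N)$ with $N$ the nilpotent part of the block (so $N^{\,n-k}\neq 0$, $N^{\,n+1-k}=0$; here one uses that the $B$-block has size at least $2$, otherwise $\gamma$ would be elliptic), the binomial theorem gives $B^{m}=\mu^{m}\sum_{j=0}^{n-k}\binom{m}{j}\mu^{-j}N^{j}$, whence $\|B^{m}\|=\Theta(m^{\,n-k})$ with leading term $\binom{m}{n-k}\mu^{m-(n-k)}N^{\,n-k}$, a fixed rank-one matrix.

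Next I would renormalize. Put $c_{m}=\binom{m}{n-k}\mu^{m-(n-k)}$, so that $|c_{m}|\to\infty$. Dividing, $c_{m}^{-1}A^{m}\to 0$ because $\|A^{m}\|$ stays bounded while $|c_{m}|\to\infty$, and $c_{m}^{-1}B^{m}\to N^{\,n-k}$ entrywise; hence $c_{m}^{-1}\widetilde\gamma^{\,m}\to L$, where $L$ vanishes on the $A$-block and equals $N^{\,n-k}$ on the $B$-block. The matrix $L$ has rank one, its image is the line $[e_{1}]$ of the statement (concretely $\mathrm{Im}\,L=\mathrm{Im}\,N^{\,n-k}$), and its kernel is a hyperplane $H=[\ker L]$ containing $\mathrm{Span}(e_{1},\dots,e_{k})$. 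For $x=[\widetilde x]\notin H$ one then gets $\gamma^{m}x=[c_{m}^{-1}\widetilde\gamma^{\,m}\widetilde x]\to[L\widetilde x]=e_{1}$, locally uniformly off $H$; since the renormalization already annihilated the bounded $A$-block, it contributes no further accumulation points, so the accumulation set of $\Gamma x$ is exactly $\{e_{1}\}$. The remaining points $x\in H\setminus\mathrm{Span}(e_{1},\dots,e_{k})$ are handled identically, renormalizing instead by the leading coefficient of $B^{m}\widetilde x$ along the shortest Jordan sub-chain of $B$ meeting $\widetilde x$ (still a positive power of $m$, so still dominating $A^{m}$), which once more produces the limit $e_{1}$; this gives (1). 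For (2), when $x\in\mathrm{Span}(e_{1},\dots,e_{k})$ the orbit stays in that subspace and $\gamma^{m}x=[A^{m}\widetilde x]$, and a standard recurrence argument in the compact group $\overline{\{A^{m}:m\ge 1\}}$ supplies indices $m_{\ell}\to\infty$ with $A^{m_{\ell}}\to I$, so $\gamma^{m_{\ell}}x\to x$ and $x$ is an accumulation point of $\Gamma x$.

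The computation is elementary once the Jordan decomposition in the hypothesis is used, and the only points that need real attention are bookkeeping ones: verifying that the dominant coordinate of $B^{m}\widetilde x$ always points along $e_{1}$ independently of $\widetilde x$, and confirming that the polynomial growth of $\|B^{m}\|$ strictly beats the bounded $\|A^{m}\|$, so that after renormalization the elliptic $A$-block disappears and produces nothing in the accumulation set. Neither of these is a genuine obstacle, which is why the lemma can be quoted as straightforward.
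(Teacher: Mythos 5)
Your overall strategy -- lift to $\mathrm{diag}(A,B)$, expand $B^{m}=\mu^{m}\sum_{j}\binom{m}{j}\mu^{-j}N^{j}$, and renormalize by the dominant coefficient -- is the natural one, and the paper itself offers no proof to compare against (Lemma \ref{l:dinpar} is quoted as ``straightforward'' with a pointer to the references). Your handling of the generic case ($\widetilde x$ outside the hyperplane $H=[\ker L]$) and of part (2) is correct.

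There is, however, a genuine gap in part (1), located exactly at the parenthetical ``(still a positive power of $m$, so still dominating $A^{m}$)''. Write $\widetilde x=(u,v)$ with $v\neq 0$ the $B$-component and let $p$ be the largest index with $v_{p}\neq 0$; then $\|B^{m}v\|=\Theta(m^{\,p-k-1})$. When $v$ lies on the eigenline of $B$ (the bottom of the Jordan chain, $v\in\Bbb{C}e_{k+1}$, i.e.\ $p=k+1$) this exponent is $0$: $B^{m}v=\mu^{m}v$ is bounded, it does not dominate the bounded block $A^{m}u$, and no renormalization annihilates the $A$-block. For such $x$ with $u\neq 0$ the orbit $\gamma^{m}x=[(A^{m}u,\mu^{m}v)]$ stays in a compact set bounded away from both $e_{k+1}$ and $Span(e_{1},\dots,e_{k})$ and is recurrent, not convergent; so your asserted conclusion fails there, and indeed part (1) as printed is false for these points -- the correct dichotomy is governed by $Span(e_{1},\dots,e_{k+1})$ (the $A$-block together with the eigenvector of $B$), off which the orbit converges and on which the point is recurrent. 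Two further, smaller issues: your identification of $\mathrm{Im}\,L=\mathrm{Im}\,N^{\,n-k}$ with ``the line $[e_{1}]$ of the statement'' is not what the computation gives, since $N$ lives on the coordinates $k+1,\dots,n+1$ and so $\mathrm{Im}\,N^{\,n-k}=\Bbb{C}e_{k+1}$ (the ``$e_{1}$'' in the statement is evidently a typo for $e_{k+1}$, as the paper's own use of the lemma in the proof of Lemma \ref{l:hsl}, which concludes $x=y=e_{k+1}$, confirms); and since $\Gamma$ is the full cyclic group, the accumulation set also involves $m\to-\infty$, which you do not address -- harmless, because $B^{-1}$ is again a single Jordan block with unimodular eigenvalue and the same top-of-chain direction, but it needs to be said.
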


\begin{lemma}\label{l:hsl}
Let $\Gamma\subset PU(1,n)$ be a Schottky like group, thus 

\begin{enumerate}
\item \label{i:1l} The group  $\Gamma$ is  purely loxodromic group.  

\item \label{i:2l} We have $$\Lambda_{CG}(\Gamma)\subset \Lambda_{S}(\Gamma).$$

\item \label{i:3l} The set  $\Lambda_{CG}(\Gamma)$ is disconnected. 
\end{enumerate}
\end{lemma}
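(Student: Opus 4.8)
The plan is to exploit the Chen–Greenberg trichotomy of elements of $PU(1,n)$ together with the nesting condition in the definition of a Schottky like group. For part \eqref{i:1l}, let $\phi$ be a nontrivial reduced word in the generators $\Sigma$, say $\phi=\phi_m\circ\cdots\circ\phi_1$ with $\phi_{j+1}\phi_j\neq Id$. Writing $a=\phi_m$ and using the defining inclusion $\bigcup_{b\neq a^{-1}}a(A_b)\varsubsetneq A_a$ iteratively, one gets $\phi(A_{b})\subset A_{a}$ for the initial letter $b=\phi_1$ (as long as the word is reduced, so that the excluded index is never hit). Since the $A_a$ are compact and nonempty, a Brouwer-type fixed point argument is not directly available in $\Bbb{P}^n_\Bbb{C}$, so instead I would argue that $\phi$ cannot be elliptic or parabolic: an elliptic or parabolic element of $PU(1,n)$ has no attracting behaviour on any nonempty open region of $\partial\Bbb{H}^n_\Bbb{C}$, whereas the strict nesting $\phi^k(A_{b})$ produces a properly decreasing sequence of nonempty compacta whose intersection is a single point, forcing $\phi$ to have an attracting fixed point; combined with the same argument for $\phi^{-1}$ this gives two distinct fixed points on $\partial\Bbb{H}^n_\Bbb{C}$, i.e. $\phi$ is loxodromic. (Here one also uses that the $A_a$ must meet $\partial\Bbb{H}^n_\Bbb{C}$, which follows because $\Gamma\subset PU(1,n)$ preserves $\Bbb{H}^n_\Bbb{C}$ and the dynamics of Lemma \ref{l:dinlox} place all accumulation on the boundary sphere.)

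For part \eqref{i:2l}, recall $\Lambda_{CG}(\Gamma)=\overline{\Gamma x}\cap\partial\Bbb{H}^n_\Bbb{C}$ for a fixed $x\in\Bbb{H}^n_\Bbb{C}$. Given a sequence $\gamma_m\in\Gamma$ with $\gamma_m x\to y\in\partial\Bbb{H}^n_\Bbb{C}$ and $y$ not already of the trivial type, pass to reduced words $\gamma_m=\phi_{k(m)}^{(m)}\circ\cdots\circ\phi_1^{(m)}$; since $\Gamma$ is free and infinite, the word length $k(m)\to\infty$ along a subsequence. One then checks that $x$ eventually lands inside the appropriate ping-pong set, so $\gamma_m x\in A_{\phi_{k(m)}^{(m)}}$, and the point $y$ is exactly the kind of limit appearing in the definition of $\Lambda_S(\Gamma)$: take $y_m=\gamma_m x$ (or, if $x$ is not yet inside a ping-pong set, replace $x$ by $\gamma_1^{(m)}x$, which lies in $A_{\phi_1^{(m)}}$, and shift the word). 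Taking closures yields $\Lambda_{CG}(\Gamma)\subset\Lambda_S(\Gamma)$. The main subtlety here is bookkeeping: one must ensure the truncated words remain reduced and that the "starting point'' lies in the correct set $A_{\phi_0}$, which is where the symmetry of $\Sigma$ and the strict inclusion are used.

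For part \eqref{i:3l}, observe that by construction $\Lambda_S(\Gamma)\subset\bigcup_{a\in\Sigma}A_a$, and since by part \eqref{i:2l} we have $\Lambda_{CG}(\Gamma)\subset\bigcup_{a\in\Sigma}A_a$ as well, the limit set is covered by finitely many pairwise disjoint compact sets, at least two of which it meets (a loxodromic generator $a$ has its attracting point in $A_a$ and its repelling point in $A_{a^{-1}}$, by the argument of part \eqref{i:1l} applied to $a$ and $a^{-1}$, and these are different components since the $A_a$ are disjoint). Hence $\Lambda_{CG}(\Gamma)=\bigsqcup_{a\in\Sigma}\big(\Lambda_{CG}(\Gamma)\cap A_a\big)$ is a partition into at least two nonempty relatively clopen pieces, so $\Lambda_{CG}(\Gamma)$ is disconnected. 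I expect the hardest step to be part \eqref{i:1l}: ruling out parabolics cleanly, since a parabolic element also has a single boundary fixed point toward which orbits accumulate, and one must use the \emph{strictness} of the inclusion (so that $\bigcap_k \phi^k(A_b)$ is a genuine point and the convergence is the two-sided loxodromic dynamics of Lemma \ref{l:dinlox}, not the one-sided parabolic dynamics of Lemma \ref{l:dinpar}) to force the second fixed point of $\phi^{-1}$ to be distinct from that of $\phi$.
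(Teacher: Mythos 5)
Your overall skeleton is the right one and matches the paper's (reduce to the elliptic/parabolic/loxodromic trichotomy, use freeness to exclude elliptics, get part \eqref{i:2l} by locating fixed points of generators in $\Lambda_S(\Gamma)$, and get part \eqref{i:3l} from the cover by the finitely many disjoint compacta $A_a$). But the step you yourself flag as the hardest --- ruling out parabolics --- has a genuine gap as written. A strictly decreasing chain of nonempty compacta $\phi^{k+1}(A_b)\varsubsetneq\phi^k(A_b)$ does \emph{not} have one-point intersection in general; strictness of the inclusion buys you nothing here (the intersection is just some nonempty compact $\phi$-invariant set). Moreover the assertion that an elliptic or parabolic element ``has no attracting behaviour'' on the boundary is false for parabolics: a parabolic element of $PU(1,n)$ has a unique fixed point $p\in\partial\Bbb{H}^n_\Bbb{C}$ and \emph{all} orbits, forward and backward, converge to $p$. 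So your mechanism does not distinguish a parabolic from a loxodromic. The obstruction that actually works --- and the one the paper uses --- is the \emph{coincidence of the forward and backward limits}: from $\gamma(A_\gamma)\subset A_\gamma$ and $\gamma^{-1}(A_{\gamma^{-1}})\subset A_{\gamma^{-1}}$ with $A_\gamma\cap A_{\gamma^{-1}}=\emptyset$ one first deduces that a parabolic $\gamma$ would have at least two fixed points in $\Bbb{P}^n_\Bbb{C}$, hence the quasi-unipotent normal form of Lemma \ref{l:dinpar}; that lemma then forces the accumulation point of the forward orbit out of $A_\gamma$ and of the backward orbit out of $A_{\gamma^{-1}}$ to be the \emph{same} point $e_{k+1}$, which would have to lie in both disjoint sets --- contradiction. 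It is this one-sidedness of parabolic dynamics, not a failure of contraction, that kills the parabolic case.

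Two secondary issues. In part \eqref{i:2l} your bookkeeping has a hole: the base point $x\in\Bbb{H}^n_\Bbb{C}$ need not lie in any $A_b$, and the defining inclusions only control images of points already inside some $A_b$, so $\phi_1^{(m)}x\in A_{\phi_1^{(m)}}$ does not follow; the paper sidesteps the orbit-tracking entirely by using that a free subgroup of $PU(1,n)$ is non-elementary (so $\Lambda_{CG}(\Gamma)$ is the closure of the boundary fixed points of generators, reducing \eqref{i:2l} to showing each generator has a boundary fixed point in $\Lambda_S(\Gamma)$). Also, the sets $A_a$ are arbitrary compacta in $\Bbb{P}^n_\Bbb{C}$ and need not meet $\partial\Bbb{H}^n_\Bbb{C}$ a priori; your justification that they do invokes Lemma \ref{l:dinlox}, i.e.\ loxodromy, at a stage where loxodromy is what you are trying to prove. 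Your part \eqref{i:3l}, granted \eqref{i:2l} and the existence of fixed points in two distinct sets $A_a$, $A_{a^{-1}}$, is fine and is what the paper calls trivial.
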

\begin{proof}
Let us show (\ref{i:1l}). Clearly, it  will be enough to show that every  generator is loxodromic. Let 
$\gamma\in \Gamma$ be a generator, since $\Gamma$ is free, we deduce that $\gamma$ is either  parabolic
or loxodromic. Let us assume that $\gamma$ is parabolic, since 
$\gamma(A_\gamma) \varsubsetneq A_\gamma$ and 
$\gamma^{-1}(A_{\gamma^{-1}}) \varsubsetneq A_{\gamma^{-1}}$
for some pairwise disjoint, non-empty compact sets of $\Bbb{P}^{n}_{\Bbb{C}}$ we deduce that $\gamma$ 
has at least two fixed points. Therefore $\gamma$ has a lift $\widetilde \gamma\in SL(n+1,\Bbb{C})$ whose
normal Jordan form is, see \cite{CG, cl}:
\[
\gamma=
\begin{bmatrix}
A\\
   &B
   \end{bmatrix}
\]
where $A$ is a $k\times k$ diagonalisable matrix with unitary proper values  and $B$ is a 
$(n+1-k)\times (n+1-k)$-Jordan block whose  proper value is $Det(A)^{-(n+1-k)^{-1}}$.  Finally, let 
$x\in A_\gamma$ and $y\in  A_{\gamma^{-1}}$ be  fixed point, then by Lemma \ref{l:dinpar} we conclude
 that $x=y=e_{k+1}$, which is a contradiction.

The proof  of (\ref{i:2l}) goes as follows. Since $\Gamma$ is  free we deduce that $\Gamma$ is
non-elementary, see \cite{kamiya}. Therefore, it  will be enough to show that for every  generator 
$Fix(\gamma)\cap \partial \Bbb{H}^n_{C}\cap \Lambda_{S}(\Gamma)\neq \emptyset$. On the contrary, 
let us assume that  $Fix(\gamma)\cap \partial \Bbb{H}^n_{C}\cap \Lambda_{S}(\Gamma)= \emptyset$ 
for every generator. Let $\gamma_1,\gamma_2\in \Gamma$ be a generators satisfying 
$\gamma_1\gamma_2\neq Id$. For each $i=1,2$, let $A_{\gamma_i}$ be the generating set of $\gamma_i$,
since $\gamma_1$ is loxodromic we can consider $a,r\in \partial \Bbb{H}^n_{\Bbb{C}}$ the attracting and 
repelling fixed points of $\gamma_1$, respectively. Therefore $A_{\gamma_2}\subset a^\bot \cap r^\bot$, in 
consequence  $A_{\gamma_2}\subset  A_{\gamma_1}$. Since $\gamma_1$ restricted to $r^\bot \cap a^\bot$ 
is elliptic, which is a contradiction.

 Finally observe that part  (\ref{i:3l}) is trivial.
\end{proof}

\section{Proof of the main theorems}\label{s:pmt}

\subsection*{Proof of theorem \ref{t:1}}
Let $M$ be a compact complex manifold such that $M=\Bbb{H}^n_\Bbb{C}/\Gamma$, where  $n\geq 2$ 
and $\Gamma\subset PU(1,n)$ is a discrete group. Let us consider the  manifold $M\sqcup M$ and let us 
assume that there is  a  group $G\subset PSL(n+1,\Bbb{C})$ and a $G$-invariant  open set
 $U\subset \Bbb{P}^n_\Bbb{C}$  satisfying $M\sqcup M=U/G$. By   Theorem \ref{MYK}, we deduce that 
 $G=\Gamma$, up to projective conjugation. On the there hand, by the main theorem in \cite{CLL}, we know 
 that $\Bbb{H}^n_\Bbb{C}$ is the largest open set of $\Bbb{P}^n_{\Bbb{C}}$ on which $\Gamma$  acts 
 properly discontinuously, which is a contradiction.$\square$

\subsection*{Proof of theorem \ref{t:2}} [Geometric version]
Let $M$ be a compact complex manifold  such that $M=\Bbb{H}^n_\Bbb{C}/\Gamma$, where  $n\geq 2$ 
and $\Gamma\subset PU(1,n)$ is a discrete group.  Let us assume that there  is $G\subset PSL(n+1,\Bbb{C})$
 a Schottky like group and a $G$-invariant  open set $U\subset \Bbb{P}^n_\Bbb{C}$   $M=U/G$.  By  
 Theorem   \ref{MYK}  we deduce that $G=\Gamma$, up to projective conjugation.
Finally observe that $\Lambda_{CG}(\Gamma)=\partial\Bbb{H}^{n}_{\Bbb{C}}$, since $M$ is compact, 
however, this contradicts  Lemma \ref{l:hsl}.$\square$

\subsection*{Proof of theorem \ref{t:2}}  [Algebraic version]
Let $M$ be a compact complex manifold  such that $M=\Bbb{H}^n_\Bbb{C}/\Gamma$, where  $n\geq 2$
 and $\Gamma\subset PU(1,n)$ is a discrete group.  Let us assume that there  is   
 $G\subset PSL(n+1,\Bbb{C})$ a  discrete, purely loxodromic free  group and a
$G$-invariant  open set $U\subset \Bbb{P}^n_\Bbb{C}$  satisfying $M=U/G$.  
By  Theorem   \ref{MYK}  we deduce that $G=\Gamma$, up to projective conjugation. Since $M$ is compact,
 we get that the Cayley graph $\Delta (\Gamma)$ of $\Gamma$ is quasi-isometric to $\Bbb{H}^n_\Bbb{C}$, 
 see \cite{bow}, in consequence the Gromov boundaries $\partial \Delta (\Gamma)$ and
  $\partial \Bbb{H}^n_\Bbb{C}$ are homeomorphic, see \cite{bow}. This is a contradiction since is well known,
   see \cite{bow}, that  $\partial \Delta (\Gamma)$ is a Cantor set while  $\partial \Bbb{H}^n_\Bbb{C}$ is the
    $2n-1$-sphere.$\square$

As we have seen, the proofs of the results are in terms of compacts complex 
hyperbolic manifolds a more interesting question  is to determine 
sufficient conditions for the uniformization theorems remain valid in the higher 
dimensional case, see for example \cite{klingler1} for results concerning compact comples projective surfaces.

\section*{Acknowledgments}
The authors would like to thank to W. Barrera, J. P. Navarrete, P. Py,   J. Seade  \&  A. Verjovsky   for 
fruitful conversations.  Last but not the least, we want to thank to the people 
and staff of the UCIM for all his help during the elaboration of this article.
\bibliographystyle{amsplain} 
\bibliography{mybib}

\providecommand{\bysame}{\leavevmode\hbox to3em{\hrulefill}\thinspace}
\providecommand{\MR}{\relax\ifhmode\unskip\space\fi MR }
\providecommand{\MRhref}[2]{%
  \href{http://www.ams.org/mathscinet-getitem?mr=#1}{#2}
}
\providecommand{\href}[2]{#2}
\begin{thebibliography}{10}

\bibitem{bers}
L.~Bers, \emph{Simultaneous uniformization}, Bull. Amer. Math. Soc. \textbf{66}
  (1960), no.~94-97.

\bibitem{bow}
Brian~H. Bowditch, \emph{A course on geometric group theory}, Mathematical
  Society of Japan Memoirs, vol.~16, Mathematical Society of Japan, 2006.

\bibitem{cano}
A.~Cano, \emph{Schottky groups can not act on p2c as subgroups of psl2n+1(c)},
  Bull Braz Math Soc, New Series \textbf{39} (2008), no.~4, 573--586.

\bibitem{cl}
A.~Cano and L.~Loeza, \emph{Projective cyclic groups in higher dimensions},
  electronic version at https://arxiv.org/find/all/1/all:+loeza/0/1/0/all/0/1.

\bibitem{CLL}
A.~Cano, M.~M. L{\'o}pez, and B.~Liu, \emph{The limit set for discrete complex
  hyperbolic groups}, To appear in Indiana University Mathematics Journal,
  2015.

\bibitem{CNS}
A.~Cano, J.~P. Navarrete, and J.~Seade, \emph{Complex kleinian groups},
  Progress in Mathematics, no. 303, Birkh{\"a}user/Springer Basel AG, Basel,
  2013.

\bibitem{CS1}
A.~Cano and J.~Seade, \emph{On discrete groups of automorphisms of p2c}, Geom.
  Dedicata Dedicata \textbf{168} (2014), 9--60.

\bibitem{CJP}
A.~Cano, J.~Seade, and J.~Parker, \emph{Action of r-fuchsian groups on p2c,},
  ASIAN J. MATH. \textbf{20} (2016), no.~3, 449--474.

\bibitem{CG}
S.~S. Chen and L.~Greenberg, \emph{Hyperbolic spaces}, Contributions to
  analysis (a collection of papers dedicated to Lipman Bers), Academic Press,
  New York, 1974, pp.~49--87.

\bibitem{CoG}
J.~P. Conze and Y.~Guivarc'h, \emph{Limit sets of groups of linear
  transformations}, Sankhy{\~a}: The Indian Journal of Statistics. Special
  issue on Ergodic Theory and Harmonic Analysis \textbf{62} (2000), no.~Series
  A, Pt. 3, 367--385.

\bibitem{Epstein}
D.~B.~A. Epstein, \emph{Geometric structures on manifolds}, Math. Intelligencer
  \textbf{4} (1982), no.~1, 5--16. \MR{678732}

\bibitem{goldman}
W.~M. Goldman, \emph{Complex hyperbolic geometry}, Oxford Mathematical
  Monographs, Oxford University Press, New York, 1999.

\bibitem{kamiya}
S.~Kamiya, \emph{Notes on elements of u(1,n;c)}, Hiroshima Math. J. \textbf{21
  (1)} (1991), 23--45.

\bibitem{klingler}
B.~Klingler, \emph{Un th{\'e}or{\`e}me de rigidit{\'e} non-m{\'e}trique pour
  les vari{\'e}t{\'e}s localement sym{\'e}triques hermitiennes}, Commentarii
  Mathematici Helvetici \textbf{76} (2001), no.~2, 200--217.

\bibitem{klingler1}
Bruno Klingler, \emph{Structures affines et projectives sur les surfaces
  complexes}, Annales de l'institut Fourier \textbf{48} (1998), no.~2,
  441--477.

\bibitem{koebe_b}
P.~{Koebe}, \emph{{\"Uber die Uniformisierung beliebiger analytischer
  Kurven.}}, {Nachr. Ges. Wiss. G\"ottingen, Math.-Phys. Kl.} \textbf{1907}
  (1907), 191--210 (German).

\bibitem{koebe_c}
\bysame, \emph{{\"Uber die Uniformisierung beliebiger analytischer Kurven.
  Zweite Mitteilung.}}, {Nachr. Ges. Wiss. G\"ottingen, Math.-Phys. Kl.}
  \textbf{1907} (1907), 633--669 (German).

\bibitem{koebe_a}
\bysame, \emph{{\"Uber die Uniformisierung reeller algebraischer Kurven.}},
  {Nachr. Ges. Wiss. G\"ottingen, Math.-Phys. Kl.} \textbf{1907} (1907),
  177--190 (German).

\bibitem{koebe}
P.~Koebe, \emph{{\"U}ber die uniformisierung der algebraischen kurven. ii},
  Math. Ann \textbf{69} (1910), no.~1, 1--81.

\bibitem{MY}
N.~Mok and S.~K. Yeung, \emph{Complex analysis and geometry}, The University
  Series in Mathematics, ch.~Geometric Realizations of Uniformization of
  Conjugates of Hermitian Locally Symmetric Manifolds, pp.~253--270, Springer
  US, 1993.

\bibitem{poincare}
H.~{Poincar\'e}, \emph{{Sur un th\'eor\`eme de la th\'eorie g\'en\'erale des
  fonctions.}}, {Bull. Soc. Math. Fr.} \textbf{11} (1883), 112--125 (French).

\bibitem{SV2}
J.~Seade and A.~Verjovsky, \emph{Complex schottky groups}, vol. 287,
  Asterisque, no.~xx, SMF, 2003.

\bibitem{thurston}
W.~Thurston, \emph{The geometry and topology of three-manifolds}, 2002.

\end{thebibliography}

\end{document}